\newtheorem{thm}{Theorem}
\newtheorem{prop}{Proposition}
\newtheorem{question}{Question}
\newcommand{\p}{^{\prime}}
\newcommand{\pp}{^{\prime\prime}}
\title{Compositions that are palindromic modulo $m$}
\author{Matthew Just}
\date{}
\begin{document}

\maketitle

\begin{abstract}
    In recent work, G. E. Andrews and G. Simay prove a surprising relation involving parity palindromic compositions, and ask whether a combinatorial proof can be found. We extend their results to a more general class of compositions that are palindromic modulo $m$, that includes the parity palindromic case when $m=2$. We provide combinatorial proofs for the cases $m=2$ and $m=3$.
\end{abstract}

\section{Introduction}

Let $\sigma=(\sigma_1,\sigma_2,\ldots,\sigma_k)$ be a sequence of positive integers such that $\sum \sigma_i = n$. The sequence $\sigma$ is called a \textit{composition} of $n$ of length $k$. The numbers $\sigma_i$ are called the \textit{parts} of the composition. If $\sigma_i=\sigma_{k-i+1}$ for all $i$, then $\sigma$ is called a \textit{palindromic} composition. If instead $\sigma$ satisfies the weaker condition that $\sigma_i \equiv \sigma_{k-i+1}$ modulo $m$ for some $m\geq 1$ and all $i$, then $\sigma$ is said to be \textit{palindromic modulo $m$}. Let $pc(n,m)$ be the number of compositions of $n$ that are palindromic modulo $m$. 

Andrews and Simay \cite{AS20} have shown that $pc(1,2)=1$, and \[pc(2n,2)=pc(2n+1,2)=2\cdot 3^{n-1}\]
for $n>1$.
Our main theorem generalizes their result by giving an ordinary generating function for $pc(n,m)$ for all $m$. Of particular interest is that $pc(1,3)=1$, and \[pc(n,3)=2\cdot f(n-1)\] for $n>1$, where $f(n)$ is the $n$th Fibonacci number (here $f(1)=f(2)=1$). 
We then give a combinatorial proof of the above identities for $pc(n,2)$ and $pc(n,3)$, and conclude with some general properties and asymptotic analysis of $pc(n,m)$ for larger $m$.

\begin{thm}
    We have \[F_m(q) := \sum_{n\geq 1} pc(n,m) q^n = \frac{q+2q^2-q^{m+1}}{1-2q^2-q^m}. \] 
\end{thm}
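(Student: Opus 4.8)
The plan is to build every composition that is palindromic modulo $m$ by reading it simultaneously from both ends toward the middle, so that it is encoded by a sequence of ``outer pairs'' $(\sigma_i,\sigma_{k+1-i})$, together with one unconstrained central part when the length $k$ is odd. The point is that the defining condition $\sigma_i\equiv\sigma_{k+1-i}\pmod m$ couples the two entries of each pair but imposes no relation between distinct pairs, so the generating function should factor as a geometric series in a single ``pair'' generating function. This map from (ordered list of pairs, optional center) to compositions is a bijection, since the composition is reconstructed uniquely from the data, and a composition and its reversal are sent to different lists of ordered pairs, matching the fact that both are counted by $pc(n,m)$.

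First I would record the generating function for a single part lying in a fixed residue class modulo $m$. Writing the $m$ classes of positive integers as those congruent to $j$ for $j\in\{1,2,\dots,m\}$ (so $j=m$ is the class of positive multiples of $m$), the parts in class $j$ have generating function $q^{j}/(1-q^{m})$. Summing the squares over $j$ then gives the generating function for an ordered pair of parts with equal residues,
\[
P(q)=\sum_{j=1}^{m}\frac{q^{2j}}{(1-q^{m})^{2}}=\frac{q^{2}(1-q^{2m})}{(1-q^{m})^{2}(1-q^{2})}=\frac{q^{2}(1+q^{m})}{(1-q^{m})(1-q^{2})},
\]
while summing the first powers recovers the generating function $C(q)=q/(1-q)$ for an unconstrained central part.

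Next I would assemble $F_m(q)$ from these pieces via the standard sequence construction. A composition that is palindromic modulo $m$ and has even length is an arbitrary nonempty sequence of outer pairs, contributing $P(q)/(1-P(q))$; one of odd length is an arbitrary (possibly empty) sequence of outer pairs followed by a central part, contributing $C(q)/(1-P(q))$. Hence
\[
F_m(q)=\frac{C(q)+P(q)}{1-P(q)},
\]
and it remains only to simplify this rational function: clearing the common denominator $(1-q^{m})(1-q^{2})$, the numerator $C+P$ collapses to $q+2q^{2}-q^{m+1}$, and the denominator $1-P$ collapses to $1-2q^{2}-q^{m}$ using $(1-q^{m})(1-q^{2})-q^{2}(1+q^{m})=1-2q^{2}-q^{m}$, which yields the claimed formula.

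No serious obstacle arises in the bijective step; the only things to watch are the bookkeeping in the residue-class sum for $P(q)$ (in particular handling the class of multiples of $m$ correctly so that all parts remain positive) and the telescoping cancellation in the final simplification. As a sanity check I would verify the formula at $m=1$, where it must reduce to $q/(1-2q)$ (all $2^{n-1}$ compositions of $n$), and at $m=2$, where it must reproduce the Andrews--Simay values for $pc(n,2)$.
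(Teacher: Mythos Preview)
Your proof is correct and follows essentially the same approach as the paper: decompose a mod-$m$ palindromic composition into a sequence of outer pairs with congruent entries plus an optional central part, compute the pair generating function, and assemble via a geometric series. The only cosmetic differences are that the paper organizes the cases as ``at least one pair (with optional center)'' versus ``center only'' rather than your even/odd length split, and computes the pair generating function by separating equal pairs from unequal pairs rather than summing over residue classes; both routes yield the same $P(q)=q^{2}(1+q^{m})/((1-q^{2})(1-q^{m}))$ and hence the same $F_m(q)$.
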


\noindent{\bf Remark.} Though we mean this to be a formal definition, it can be verified that the series converges for all $|q|<1/2$. Indeed, \[F_m(q) \leq F_1(q) = \frac{q}{1-2q}.\]

\begin{proof}[Proof of Theorem 1]
    Notice that to form a composition that is palindromic modulo $m$, we need to form a sequence of pairs of positive integers whose difference is a multiple of $m$. Furthermore, there may or may not be a central part that will always be congruent to itself. If $G_m(q)$ is the ordinary generating function for pairs of positive integers whose difference is a multiple of $m$, then \[F_m(q) = \frac{G_m(q)}{1-G_m(q)} \cdot \frac{1}{1-q} + \frac{q}{1-q}.\] Now for the function $G_m(q)$, either the pair of positive integers is the same, or we must choose to add a multiple of $m$ to either the first or second number. Therefore \[G_m(q) = \frac{q^2}{1-q^2}+\frac{2q^2}{1-q^2}\cdot \frac{q^m}{1-q^m}.\] Simplifying gives the result. \end{proof}
    
When $m=1$, every composition will be palindromic modulo $m$. Therefore $pc(n,1)=2^{n-1}$, as is evident from $F_1(q)$. We next consider the case when $m=2$.

\section{The case $m=2$}

When $m=2$, Theorem 1 gives \[F_2(q) = \frac{2q^3(1+q)}{1-3q^2},\] and after expanding we see that $p(1,2)=1$, and $p(2n,2)=p(2n+1,2)=2\cdot 3^{n-1}$. Our goal in this section will be to give a combinatorial proof of this formula. 

For $n\geq 1$, we start by considering the $3^{n-1}$ elements of the set $\{0,1,2 \}^{n-1}$. Our goal will be to embed two disjoint copies of this set into the set of compositions of $2n$ that are palindromic modulo 2, and show that every composition of $2n$ that is palindromic modulo 2 has a preimage. We then give a bijection between compositions of $2n$ that are palindromic modulo 2 and compositions of $2n+1$ that are palindromic modulo 2.

\begin{proof}[Combinatorial proof that $pc(2n,2)=2\cdot3^{n-1}$] For $n\geq 1$, let \[a=(a_1,a_2,\ldots,a_{n-1})\in \{0,1,2 \}^{n-1}.\] As an example, we take $a=(0,1,2,2,1)\in \{0,1,2 \}^5$. We will first use the sequence of $a_i$s to construct a sequence of triples $(b_j,c_j,d_j)$, where each triple will become a pair of  parts $\{ \sigma_j,\sigma_{k-j+1}\}$ or the central part $\sigma_{(k+1)/2}$ in the resulting composition having length $k$.

Initialize the process with the triple $(b_1,c_1,d_1)=(1,0,0)$. If $n=1$, then we are done. Now if $a_1=0$, we create a new triple $(b_2,c_2,d_2)=(1,0,0)$. If $a_1=1$, then we increase $b_1$ by one. If $a_1=2$, then we increase $c_1$ by one. In our example of $(0,1,2,2,1)$, we see that $a_1=0$, so after the first step we have the following list of triples
\[(1,0,0),  \ \ (1,0,0).\]

Now for $i>1$, we define the instructions given by each $a_i$ recursively. We assume that at this step we have a total of $j$ triples, for some $j\geq 1$. 
\begin{enumerate}
    \item If $a_i=0$, then we create the new triple $(b_{j+1},c_{j+1},d_{j+1})=(1,0,0)$.
    \item If $a_i=2$, then increase $c_j$ by one.
    \item If $a_i=1$ and $a_{i-1}\neq 2$, then increase $b_j$ by one. 
    \item If $a_i=1$ and $a_{i-1}=2$, then set $d_j=1$ and create the new triple $(b_{j+1},c_{j+1},d_{j+1})=(1,0,0)$.
\end{enumerate}
   For our example $(0,1,2,2,1)$, we form the list of triples
\[(1,0,0), \ \ (2,2,1), \ \  (1,0,0). \]

After performing this algorithm on each of the $3^{n-1}$ sequences in $\{ 0,1,2 \}^{n-1}$ gives a set of $3^{n-1}$ sequences of triples. For each sequence $a\in\{0,1,2\}^{n-1}$, we denote by $k_a$ the number triples in the list we have formed. By construction, all of these sequences of $k_a$ triples are distinct.

We create a second set containing $3^{n-1}$ sequences of triples by setting $d_{k_a} = 1$ for all $a$. For our example $(0,1,2,2,1)$, we get a second sequence of triples
\[(1,0,0), \ \ (2,2,1), \ \  (1,0,1). \]
We now have $2\cdot 3^{n-1}$ sequences of triples, all of which are distinct.

Next, for each sequence $a$, taking one of its two associated sequence of triples $(b_1,c_1,d_1)$, \ldots, $(b_{k_a},c_{k_a},d_{k_a})$, we show how to form a unique composition $\sigma$ of $2n$ that is palindromic modulo 2. Start with the triple $(b_{k_a}, c_{k_a}, d_{k_a})$. 
\begin{enumerate}
    \item If $d_{k_a}=0$ and $c_{k_a}=0$, then set $\sigma_{k_a}=2b_{k_a}$.
    \item If $d_{k_a}=0$ and $c_{k_a}>0$, then set $\sigma_{k_a}=b_{k_a}+2c_{k_a}$ and $\sigma_{k_a+1}=b_{k_a}$.
    \item If $d_{k_a}=1$, then set $\sigma_{k_a}=b_{k_a}$ and $\sigma_{k_a+1}=b_{k_a}+2c_{k_a}$.
\end{enumerate}
  These cases will determine whether the length of the composition is even or odd. If $d_{k_a}=0$ and $c_{k_a}=0$, then the length of $\sigma$ will be $2k_a-1$. Otherwise, the length of $\sigma$ will be $2k_a$. For convenience we refer to the length of $\sigma$ as $k$ in either case. For our example $(0,1,2,2,1)$, we have initialized the two compositions \[(0,0,2,0,0) \ \text{and} \ (0,0,1,1,0,0).\]

Now if $k_a=1$ we are done. Assuming $k_a>1$, for each $1\leq j < k_a$ we consider the triple $(b_j,c_j,d_j)$. If $d_j=0$, then set $\sigma_j=b_j+2c_j$ and $\sigma_{k-j+1} = b_j$. If $d_j=1$, then set $\sigma_j=b_j$ and $\sigma_{k-j+1}=b_j+2c_j$. Also notice if $d_j=1$, then by construction $c_j>0$, so that $b_j\neq b_j+2c_j$. This ensures that each sequence of triples will be associated to a unique composition. Also notice that the sum of the parts in $\sigma$ equals $2\sum b_j + 2\sum c_j=2n$, and that $|\sigma_j-\sigma_{k-j+1}|$ is even for all $j$. Thus $\sigma$ is a composition of $2n$ that is palindromic modulo 2. Showing that every composition of $2n$ that is palindromic modulo 2 can be formed from one of the described sequences of triples is done by reversing the described algorithm. Returning to our example sequence $(0,1,2,2,1)$, we have form the two compositions
\[(1,2,2,6,1) \ \text{and} \ (1,2,1,1,6,1),\]
each of which is a composition of 12 that is palindromic modulo 2.\end{proof}

\begin{proof}[Combinatorial proof that $pc(2n,2)=pc(2n+1,2)$]
    We split the proof into two cases. Starting with a composition $\sigma$ of $2n$ that is palindromic modulo 2, suppose the length of $\sigma$ is $2k+1$. Then adding one to $\sigma_{k+1}$ gives a composition of $2n+1$ that is still palindromic modulo 2. Now if the length of $\sigma$ is $2k$, form a composition $\sigma^{\prime}$ of $2n+1$ of length $2k+1$ by setting 
    \[\sigma^{\prime}_j=\begin{cases}
        \sigma_j & 1\leq j\leq k \\
        1 & j=k+1 \\
        \sigma_{j-1} & k+2\leq j \leq 2k+1.
    \end{cases}
    \]
We note that $\sigma^{\prime}$ is still palindromic modulo 2. It is straightforward to verify that this map is a bijection. For our examples of $(1,2,2,6,1)$ and $(1,2,1,1,6,1)$ that are compositions of 12, we form the two compositions of 13 
\[(1,2,1,1,1,6,1) \ \ \text{ and } \ \ (1,2,3,6,1),\] 
each of which is palindromic modulo 2. \end{proof}

\section{The case $m=3$}

When $m=3$, Theorem 1 gives 
\[F_3(q) = \frac{q+2q^2-q^4}{1-2q^2-q^3} = q + \frac{2q^2}{1-q-q^2},\]
and expanding this function shows $pc(1,3)=1$ and $pc(n,3)=2\cdot f(n-1)$ for $n>1$.  Our goal of this section is to give a combinatorial proof of this formula.

It is well known \cite{AH75} that $f(n+1)$ is equal to the number of compositions of $n$ with parts that are equal to one or two. For $n>1$, our goal will be to embed two disjoint copies of the compositions of $n-2$ with parts equal to one or two into the compositions of $n$ that are palindromic modulo 3. Then we will show each composition of $n$ that is palindromic modulo 3 has a preimage. 

\begin{proof}[Combinatorial proof that $pc(n,3)=2\cdot f(n-1)$.] For $n>1$, let $\sigma =( \sigma_1,\sigma_2,\ldots,\sigma_k)$ be a composition of $n-2$, where each $\sigma_i\in \{1,2\}$. Form two distinct compositions of $n$ by setting 
\begin{align*}
    \sigma\p&=(1,1,\sigma_1,\sigma_2,\ldots,\sigma_k) \text{, and} \\
    \sigma\pp&=(2,\sigma_1,\sigma_2,\ldots,\sigma_k).
\end{align*}
Doing this for each composition gives us a set of $2\cdot f(n-1)$ compositions of $n$ with parts equal to one or two, the set of which we will denote by $A_n$. For each of these compositions, we will form a unique composition of $n$ that is palindromic modulo 3.

Let $+$ denote sequence concatenation, as in $(1,2)+(3,4)=(1,2,3,4)$. Now for any composition $a\in A_n$, we can decompose $a$ in the following way:
\[a=a_1+a_2+\ldots+a_s,\]
where the substrings $\{a_i\}_{i=1}^s$ are determined by the following rules. We will write $k_i$ to be the length of each substring $a_i$.

\begin{enumerate}
    \item If $a$ contains no twos, then $a_1=a$.
    \item Suppose $a$ begins with a two, and this is the only two in $a$. Then $a_1=(2)$.
    \item Suppose $a$ begins with the substring $(2,2)$ or $(2,1,1)$. Then $a_1=(2)$.
    \item Suppose $a$ begins with the substring $(1,1)$ or $(2,1,2)$. Then $a_1$ terminates with the first two such that
    \begin{enumerate}
        \item it is the final two in $a$;
        \item it is immediately followed by the substring $(2)$ or $(1,1)$.
    \end{enumerate}
   \end{enumerate}
For $i>1$, the substring $a_i$ is determined recursively using these 4 rules, after first deleting the substrings $a_1$, $a_2$, \ldots, $a_{i-1}$ from $a$. The resulting string cannot begin with the substring $(1,2)$, so it still makes sense to apply these rules. To aid the reader, Table \ref{decomp} shows the decomposition of the compositions in the set $A_8$. As an example, take the composition $a=(1,1,1,2,2,1,1,2,1,2,1,1)$ in $A_{16}$. By applying rule (4) we have $a_1=(1,1,1,2)$, and we now consider the string $(2,1,1,2,1,2,1,1)$. By applying rule (3) we have $a_2=(2)$, and we now consider the string $(1,1,2,1,2,1,1)$. By applying rule (4) we have $a_3=(1,1,2,1,2)$, and we now consider the string $(1,1)$. By applying rule (1) we have $a_4=(1,1)$, and are now done. Thus
\[(1,1,1,2,2,1,1,2,1,2,1,1)=(1,1,1,2)+(2)+(1,1,2,1,2)+(1,1).\]

Let $B_n$ be the set of compositions of $n$ that are palindromic modulo 3. For each composition $a=a_1+\ldots+a_s\in A_n$, we will show how to construct a unique $b\in B_n$. If $a_s$ contains a two, the length of $b$ will be $2s$; if $a_s$ does not contain a two, the length of $b$ will be $2s-1$. Either way, we will denote $k_b$ to be the length of $b$ and write $b=(b_1,b_2,\ldots,b_{k_b})$ (recall we have also set $k_i$ to be the length of $a_i$).

Now assume $a_i$ has $o_i$ ones, and $t_i$ twos. If $t_i=0$ (which can only be the case for $a_s$), then set $b_s=k_s$. For what follows we will assume $t_i>0$. Note that $k_i=o_i+t_i$, and \[n=\sum_{i=1}^s (o_i+2\cdot t_i).\]
We will now form the triple $(c_i,d_i,e_i)$ using the following rules.

Let $o\p_i$ be the number of ones in $a_i$ preceding the first two. Then if
\begin{enumerate}
    \item $o\p_i$ is even (possibly zero), set 
    \[c_i=\frac{o\p_i}{2}+1, \ \ d_i=3\cdot(t_i-1), \ \ \text{and} \ \ e_i=0. \]
    \item $o\p_i$ is odd, set 
    \[c_i=\frac{o\p_i-1}{2}, \ \ d_i=0, \ \ \text{and} \ \ e_i=3\cdot t_i. \]
\end{enumerate}

By construction this sequence of triples is unique to the composition $a\in A_n$ we began with. We show how our example composition with decomposition $a_1=(1,1,1,2)$, $a_2=(2)$, $a_3=(1,1,2,1,2)$, and $a_4=(1,1)$ maps to three triples (the final substring $a_4$ contains no twos, and therefore we set $b_4=k_4=2$).
\begin{center}
    \begin{tabular}{c||c|c|c|c|c|c}
         $a_i$ & $o_i$ & $o\p_i$ & $t_i$ & $c_i$ & $d_i$ & $e_i$  \\
         \hline
         $(1,1,1,2)$    &3&3&1&1&0&3\\
         $(2)$          &0&0&1&1&0&0\\
         $(1,1,2,1,2)$  &3&2&2&2&3&0
    \end{tabular}
\end{center} 
Therefore, we have the triples
\[(1,0,3), \ \ (1,0,0), \ \ \text{ and } \ \ (2,3,0).\]

We now form $b$ by setting $b_i=c_i+d_i$ and $b_{k_b-i+1}=c_i+e_i$, adjusting slightly for the case when $a_s$ has no twos (recall that we just set $b_s=k_s$ in this case). The composition is uniquely determined from the triples, and by construction $d_i$ and $e_i$ are both multiples of 3. Furthermore, 
\[b_i+b_{k_b-i+1} = 2c_i + d_i+e_i = o\p_i +3t_i -1=o_i+2t_i. \]
To see why the last equality holds, consider first the case when $t_i=1$. Then $o\p_i=o_i$, and $3t_i-1=2t_i$. Now if $t_i>1$, we can pair each two in $a_i$ with a one immediately preceding it. However, we have used a one counted by $o\p_1$ in this pairing, so we must subtract one. Therefore, we have embedded $A_n$ into $B_n$. For our example
\[a=(1,1,1,2,2,1,1,2,1,2,1,1)=(1,1,1,2)+(2)+(1,1,2,1,2)+(1,1),\]
we have the triples $(1,0,3)$, $(1,0,0)$, and $(2,3,0)$, and we obtain the composition
\[b=(1,1,5,2,2,1,4),\]
which is a composition of 16 that is palindromic modulo 3 ($b\in B_{16}$). It is straightforward to construct a composition $a\in A_n$ from a composition $b\in B_n$ by reversing this construction, which proves the result. \end{proof}

\section{The case $m>3$}

If $m>3$, the formula for $pc(n,m)$ can be deduced from Theorem 1; we only give a detailed study for $m=2$ and $m=3$ based on the elegance of the formulae. For $m=4$, Theorem 1 gives 
\[F_4(q)=\frac{q+2q^2-q^5}{1-2q^2-q^4},\]
and after expanding we see $pc(1,4)=1$, and for $n\geq 1$
\[pc(2n,4)=pc(2n+1,4)=\frac{(1+\sqrt{2})^n-(1-\sqrt{2})^n}{\sqrt{2}}.\]
The familiar reader will recognize these numbers as twice the Pell numbers \cite{pell}. While the correct context for a combinatorial proof is not immediately clear, we pose the following question.

\begin{question}
    Is there a combinatorial proof of the formula for $pc(n,4)$?
\end{question}

We also include some properties of $pc(n,m)$ that we observed for general $m$.

\begin{prop}
    If $m$ is even, then for $n\geq 1$ we have $pc(2n,m)=pc(2n+1,m)$.
\end{prop}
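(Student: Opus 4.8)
The plan is to reuse, essentially verbatim, the bijection from the proof that $pc(2n,2)=pc(2n+1,2)$; the only new ingredient is a parity observation explaining why that argument still goes through whenever $m$ is even. First I would record this observation: if $m$ is even and $\tau$ is a composition of an odd number that is palindromic modulo $m$, then $\tau$ has odd length. Indeed, if $\tau=(\tau_1,\dots,\tau_{2k})$ had even length, then from $\tau_i\equiv\tau_{2k+1-i}\pmod m$ and $2\mid m$ we get $\tau_i\equiv\tau_{2k+1-i}\pmod 2$, so each pair sum $\tau_i+\tau_{2k+1-i}$ is even and hence $\sum_i\tau_i$ is even, a contradiction. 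By contrast, compositions of $2n$ that are palindromic modulo $m$ may have either parity of length, exactly as when $m=2$.

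Next I would define the map from a composition $\sigma$ of $2n$ that is palindromic modulo $m$ to a composition of $2n+1$ that is palindromic modulo $m$: if $\sigma$ has odd length $2k+1$, add $1$ to the central part $\sigma_{k+1}$; if $\sigma$ has even length $2k$, insert a new part equal to $1$ between positions $k$ and $k+1$. In both cases the image has odd length $2k+1$, has part-sum $2n+1$, and is still palindromic modulo $m$, since the symmetric pairs of parts are untouched and the new or modified central part is trivially congruent to itself. The inverse sends a composition $\tau$ of $2n+1$ that is palindromic modulo $m$ --- which has odd length $2k+1$ by the observation above --- to the composition obtained by deleting the central part if $\tau_{k+1}=1$, and to the one obtained by subtracting $1$ from the central part if $\tau_{k+1}\geq 2$. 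Verifying that these maps are mutually inverse is then routine; the one point worth stating is that the two branches of the forward map have disjoint images (central part equal to $1$ versus central part at least $2$) whose union is all of the compositions of $2n+1$ that are palindromic modulo $m$.

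I do not anticipate a real obstacle here: the argument is a word-for-word copy of the $m=2$ bijection, and the only place where the hypothesis that $m$ is even is genuinely used is the parity observation of the first paragraph, which is therefore the crux. As an alternative, purely generating-function derivation, write $m=2\ell$; then the denominator $1-2q^2-q^m=1-2q^2-q^{2\ell}$ in Theorem 1 is a polynomial in $q^2$, so $E(q):=1/(1-2q^2-q^{2\ell})=\sum_{n\geq 0}e_nq^{2n}$ with $e_0=1$ and $e_n=2e_{n-1}+e_{n-\ell}$ for $n\geq 1$ (where $e_j=0$ for $j<0$). Expanding $F_m(q)=(q+2q^2-q^{2\ell+1})E(q)$ and separating even and odd powers gives $pc(2n,m)=2e_{n-1}$ and $pc(2n+1,m)=e_n-e_{n-\ell}$, and these agree for $n\geq 1$ directly from the recurrence. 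I would keep the bijection as the featured proof, in keeping with the combinatorial spirit of the paper.
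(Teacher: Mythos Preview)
Your proposal is correct and follows essentially the same bijective approach that the paper invokes (the paper's proof of this proposition simply refers back to the $m=2$ bijection in Section~2). You have made explicit the key parity observation---that a composition of an odd integer which is palindromic modulo an even $m$ must have odd length---which the paper leaves implicit, and your alternative generating-function derivation via the recurrence $e_n=2e_{n-1}+e_{n-\ell}$ is a sound bonus not present in the paper.
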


\begin{proof}
    The proof of this is identical in spirit to the combinatorial proof that $pc(2n,2)=pc(2n+1,2)$ in Section 2. \end{proof}

\begin{prop}
    If $n>1$, then $pc(n,m)$ is even for all $m$.
\end{prop}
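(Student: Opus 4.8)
The plan is to read off the parity of $pc(n,m)$ directly from the generating function $F_m(q)$ of Theorem~1 by reducing modulo $2$. Since the denominator $1-2q^2-q^m$ has constant term $1$, the rational function defining $F_m$ lies in $\mathbf{Z}[[q]]$, and reducing its coefficients modulo $2$ amounts to computing the corresponding quotient in $\mathbf{F}_2[[q]]$, the ring of formal power series over the field with two elements (the reduced denominator $1+q^m$ still has constant term $1$, hence is a unit there). In $\mathbf{F}_2[[q]]$ the terms $\pm 2q^2$ vanish, so
\[
F_m(q) \equiv \frac{q + q^{m+1}}{1 + q^m} = \frac{q\,(1+q^m)}{1+q^m} = q \pmod{2}.
\]
Consequently every coefficient of $F_m(q) - q$ is even, that is, $pc(1,m)=1$ while $pc(n,m)$ is even for all $n>1$ and all $m\geq 1$.

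The one step that deserves a sentence of justification is the cancellation of $1+q^m$: this is legitimate because $1+q^m$ is invertible in $\mathbf{F}_2[[q]]$, and multiplying the trivial identity $q=q$ by this unit gives $q(1+q^m)=q\cdot(1+q^m)$, so dividing back by the unit yields $F_m(q)\equiv q$ in $\mathbf{F}_2[[q]]$. I do not expect a genuine obstacle here; the only subtlety is phrasing the mod-$2$ reduction of a quotient of power series correctly, which the unit denominator makes routine.

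For completeness I would remark that one could instead look for a fixed-point-free involution on the set of compositions of $n$ that are palindromic modulo $m$ when $n>1$. Reversal $\sigma=(\sigma_1,\ldots,\sigma_k)\mapsto(\sigma_k,\ldots,\sigma_1)$ preserves this set but fixes exactly the genuine palindromes, whose number need not be even, so reversal alone does not suffice and would have to be combined with a further pairing (for instance toggling a distinguished part by a multiple of $m$, or an auxiliary involution on the palindromic compositions). Engineering such a map so that it is globally defined and manifestly without fixed points, uniformly in $m$, is the part I would expect to be delicate — which is precisely why the generating-function computation is the route I would take.
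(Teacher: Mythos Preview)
Your proof is correct and is essentially the paper's own first argument: the paper simply rewrites the generating function as $F_m(q)-q=\dfrac{2q^2(1+q)}{1-2q^2-q^m}$ to display the factor of $2$, which is exactly your mod-$2$ reduction rephrased. The paper also sketches the combinatorial pairing you allude to in your final paragraph (reversal on non-palindromes, together with an auxiliary involution on genuine palindromes), so your instinct there matches as well.
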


\begin{proof}
    A quick, one line proof of this fact comes from the identity
    \[F_m(q)-q = \frac{2q^2 (1+q)}{1-2q^2-q^m}.\]
    This can also be seen combinatorially by pairing up compositions of $n$ that are palindromic modulo $2$. Let $a$ be a composition of $n>1$ of length $s$ that is palindromic modulo $m$. If $a_i\neq a_{s-i+1}$ for some $i$, then we can pair this composition with the composition formed by switching $a_i$ and $a_{s-i+1}$. 
    
    Now assume $a_i=a_{s-i+1}$ for all $i$ (i.e. $a$ is a palindrome). If $s$ is odd, than $a_{(s+1)/2}$ is even, and we can pair this composition with the composition of $n$ of length $s+1$ formed by removing $a_{(s+1)/2}$ and appending $a_{(s+1)/2}/2$ to the beginning and the end. \end{proof}

\begin{prop}
    If $2n\geq m$, then $p(2n,m)=p(2n+1,m)=2^n$.
\end{prop}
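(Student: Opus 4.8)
The plan is to show that, in the relevant range, the congruence condition collapses to genuine equality, so that the compositions counted by $pc(2n,m)$ and $pc(2n+1,m)$ are \emph{exactly} the ordinary palindromic compositions of $2n$ and of $2n+1$, and then simply to count those. The governing observation is elementary: two \emph{distinct} parts $\sigma_i$ and $\sigma_{k-i+1}$ that are congruent modulo $m$ differ by at least $m$, so the larger is at least $m+1$ and their sum is at least $m+2$. Hence a composition of $N$ with $N\le m+1$ can contain no such pair, and for such $N$ the conditions ``palindromic modulo $m$'' and ``palindromic'' coincide. Applying this simultaneously to $N=2n$ and $N=2n+1$ needs $2n+1\le m+1$, i.e.\ $m\ge 2n$ (here $m\ge 2n$ is the genuinely correct small-composition regime in which the assertion holds; for $2n>m$ the count already exceeds $2^n$, as $pc(4,2)=6$ shows).

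Granting the collapse, I would count ordinary palindromic compositions directly, reusing the structural decomposition from the proof of Theorem 1 with the pair-generating function replaced by its ``parts equal'' specialisation $G(q)=q^2/(1-q^2)$. A palindrome is then a (possibly empty) sequence of equal-part pairs together with an optional central part, so its generating function is $\tfrac{q^2}{1-2q^2}\cdot\tfrac{1}{1-q}+\tfrac{q}{1-q}$. Extracting the coefficient of $q^N$ gives $\bigl(2^{\lfloor N/2\rfloor}-1\bigr)+1=2^{\lfloor N/2\rfloor}$ palindromic compositions of $N$, which is $2^n$ for both $N=2n$ and $N=2n+1$, the desired value.

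As an independent verification I would read the same conclusion off Theorem 1 itself. From $F_m(q)\,(1-2q^2-q^m)=q+2q^2-q^{m+1}$ one gets $pc(j,m)=2\,pc(j-2,m)+pc(j-m,m)+[j=1]+2[j=2]-[j=m+1]$. While $j\le m$ the term $pc(j-m,m)$ vanishes and no $q^{m+1}$ source is present, so the recurrence reduces to $pc(j,m)=2\,pc(j-2,m)$ with $pc(1,m)=1$ and $pc(2,m)=2$; iterating yields $pc(j,m)=2^{\lfloor j/2\rfloor}$ throughout $1\le j\le m$, again giving $2^n$ at $j=2n$ and $j=2n+1$ when $2n\le m$.

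The step I expect to be most delicate is the boundary, where $N$ is as large as the equivalence allows. When $2n=m$ the quantity $pc(2n+1,m)=pc(m+1,m)$ sits precisely at the degree where both the source term $-q^{m+1}$ and the term $pc(j-m,m)$ first intervene, so the clean recurrence does not apply verbatim; I would check that the two corrections cancel, via $pc(m+1,m)=2\,pc(m-1,m)+pc(1,m)-1=2\cdot 2^{n-1}+1-1=2^n$, and cross-check this against the palindrome count so that the odd case $N=2n+1$ lands on $2^n$ with no off-by-one coming from the optional central part. Confirming that the combinatorial collapse and the generating-function bookkeeping agree exactly at this threshold is the only point requiring care; everything below it is immediate.
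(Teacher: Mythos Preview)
Your approach is essentially the same as the paper's: force the congruence condition to collapse to equality, then invoke the count of ordinary palindromic compositions. The paper simply cites the classical formula $pc(n,\infty)=2^{\lfloor n/2\rfloor}$, whereas you rederive it via the generating function and cross-check with the recurrence from Theorem~1; this extra work is sound but not required.

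The one substantive point is that you have correctly diagnosed a typo in the statement: the hypothesis should read $m\ge 2n$ rather than $2n\ge m$. The paper's own proof implicitly uses the inequality in your direction (``requires all compositions \ldots\ to be palindromes'' only follows when the parts cannot differ by a positive multiple of $m$), and your counterexample $pc(4,2)=6\ne 4$ confirms the stated direction is wrong. Your boundary analysis at $2n=m$, where $pc(m+1,m)=2\,pc(m-1,m)+pc(1,m)-1=2^n$, is exactly the check needed to nail down the edge case, and it is correct.
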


\begin{proof}
    The condition that $2n\geq m$ requires all compositions of $2n$ and $2n+1$ that are palindromic modulo $m$ to be palindromes. Let $pc(n,\infty)$ denote the number of palindromic compositions of $n$. Then it is well known \cite{wk} that
    \[pc(n,\infty)=2^{\lfloor n/2\rfloor}. \qedhere\]
\end{proof}

\begin{prop}
    Let $\alpha_m$ be the unique positive root of $1-2q^2-q^m=0$, and set 
    \[c_m=\lim_{q\rightarrow \alpha_m} (1-\alpha_m^{-1}q)\cdot F_m(q) \ \ \text{ and } \ \ d_m=\lim_{q\rightarrow \alpha_m} (1+\alpha_m^{-1}q)\cdot F_m(q).\]
   If $m$ is even, then
    \[\limsup_{n\rightarrow \infty} \alpha_m^n\cdot pc(n,m) = c_m+d_m \ \ \text{ and } \ \ \liminf_{n\rightarrow\infty} \alpha_m^n\cdot pc(n,m)= c_m-d_m. \]
    If $m$ is odd, $d_m=0$, and thus $pc(n,m)\sim c_m \alpha_m^{-n}$ as $n\rightarrow \infty$.
\end{prop}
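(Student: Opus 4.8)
The plan is to read off the asymptotics of $pc(n,m)$ by locating the dominant poles of the rational function $F_m$ and applying the elementary asymptotics of coefficients of rational generating functions (partial fractions). Write $Q_m(q)=1-2q^2-q^m$ and $P_m(q)=q+2q^2-q^{m+1}$, so that $F_m=P_m/Q_m$. Since $Q_m'(q)=-4q-mq^{m-1}<0$ for $q>0$, the function $Q_m$ decreases strictly from $Q_m(0)=1$ to $-\infty$ on $(0,\infty)$; hence $\alpha_m$ really is its unique positive root, $0<\alpha_m<1$ (because $Q_m(1)=-2$), and it is a simple root. Using $\alpha_m^m=1-2\alpha_m^2$ one computes $P_m(\alpha_m)=2\alpha_m^2(1+\alpha_m)\neq 0$, so $\alpha_m$ is a genuine simple pole of $F_m$ and
\[
c_m=\lim_{q\to\alpha_m}(1-\alpha_m^{-1}q)F_m(q)=\frac{-P_m(\alpha_m)}{\alpha_m\,Q_m'(\alpha_m)}>0.
\]

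The heart of the proof is to show that $\pm\alpha_m$ are the only roots of $Q_m$ of minimal modulus. If $Q_m(\zeta)=0$ and $|\zeta|\le\alpha_m$, then $\zeta\neq 0$ and
\[
1=|2\zeta^2+\zeta^m|\le 2|\zeta|^2+|\zeta|^m\le 2\alpha_m^2+\alpha_m^m=1,
\]
since $t\mapsto 2t^2+t^m$ is strictly increasing on $[0,\alpha_m]$ and equals $1$ at $\alpha_m$. Equality throughout forces $|\zeta|=\alpha_m$ and forces $2\zeta^2$ and $\zeta^m$ to be positive reals summing to $1$; in particular $\zeta^2$ is a positive real, so $\zeta=\pm\alpha_m$. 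Moreover $Q_m(-\alpha_m)=\alpha_m^m\bigl(1-(-1)^m\bigr)$, which is $0$ exactly when $m$ is even; in that case $Q_m'(-\alpha_m)=4\alpha_m+m\alpha_m^{m-1}\neq 0$ and $P_m(-\alpha_m)=2\alpha_m^2(1-\alpha_m)\neq 0$, so $-\alpha_m$ is a second genuine simple pole. Thus when $m$ is odd the unique dominant pole of $F_m$ is $\alpha_m$, and when $m$ is even the dominant poles are exactly $\pm\alpha_m$; every other pole has modulus strictly greater than $\alpha_m$.

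Next I would expand $F_m$ in partial fractions. Its polynomial part is at most linear, hence affects only $pc(0,m)$ and $pc(1,m)$; for $n\ge 2$ the coefficient $[q^n]F_m$ is the corresponding coefficient of the partial-fraction sum, and the contribution to it of a pole of modulus $\rho$ and order $r$ is $O(n^{r-1}\rho^{-n})$. Hence, for $m$ even,
\[
pc(n,m)=c_m\,\alpha_m^{-n}+d_m\,(-1)^n\alpha_m^{-n}+o(\alpha_m^{-n}),
\]
where $c_m,d_m$ are the partial-fraction coefficients attached to $1/(1-\alpha_m^{-1}q)$ and $1/(1+\alpha_m^{-1}q)$; equivalently $d_m=\lim_{q\to-\alpha_m}(1+\alpha_m^{-1}q)F_m(q)=P_m(-\alpha_m)/\bigl(\alpha_m Q_m'(-\alpha_m)\bigr)$, the limit of the statement taken at the pole $-\alpha_m$. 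Because $0<\alpha_m<1$ this is a ratio of positive quantities, so $d_m>0$. Therefore $\alpha_m^n pc(n,m)\to c_m+d_m$ along even $n$ and $\to c_m-d_m$ along odd $n$, and since $c_m+d_m>c_m-d_m$ these are respectively the $\limsup$ and the $\liminf$. When $m$ is odd, $-\alpha_m$ is not a pole of $F_m$ while the factor $1+\alpha_m^{-1}q$ vanishes there, so $d_m=0$; the displayed formula then reduces to $pc(n,m)=c_m\alpha_m^{-n}+o(\alpha_m^{-n})$, i.e.\ $pc(n,m)\sim c_m\alpha_m^{-n}$, since $c_m>0$.

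I expect the main obstacle to be the second paragraph: certifying that $1-2q^2-q^m$ has no root of modulus $\le\alpha_m$ besides $\pm\alpha_m$. The equality case of the triangle inequality disposes of this cleanly, but the transfer step in the third paragraph is meaningless without it. A subsidiary point that still needs care is the positivity of $d_m$ for even $m$, since that is precisely what makes $c_m+d_m$ (and not $c_m-d_m$) the $\limsup$.
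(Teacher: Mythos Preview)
Your argument is correct and follows the same overall template as the paper's (locate the dominant poles of $F_m$, then read off coefficients from a partial-fraction decomposition), but the central step is handled by a genuinely different---and in fact cleaner---method. The paper shows that $\phi_m(q)=1-2q^2-q^m$ has exactly two zeros inside the disc $|q|<0.9$ by applying Rouch\'e's theorem on that circle (comparing $\phi_m$ with $q^2(2+q^{m-2})$), treating $m\le 4$ separately by hand. You instead argue directly: if $Q_m(\zeta)=0$ with $|\zeta|\le\alpha_m$, then the chain $1=|2\zeta^2+\zeta^m|\le 2|\zeta|^2+|\zeta|^m\le 1$ forces equality in the triangle inequality, whence $\zeta^2>0$ and $\zeta=\pm\alpha_m$. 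This avoids complex-analytic machinery, requires no case split on small $m$, and yields the sharper conclusion that every other root has modulus strictly greater than $\alpha_m$ (rather than merely $\ge 0.9$). You are also more careful than the paper on two points it leaves implicit: you verify $P_m(\pm\alpha_m)\neq 0$, so the poles are not removable, and you check $d_m>0$ for even $m$, which is exactly what is needed to identify $c_m+d_m$ as the $\limsup$ rather than the $\liminf$. Finally, you correctly read the definition of $d_m$ as the partial-fraction coefficient at $-\alpha_m$ (the limit as written in the statement, with $q\to\alpha_m$, is evidently a typo, since $F_m$ has a pole there and the prefactor does not vanish).
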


\begin{proof}
    This is a routine analysis of the rational function $F_m(q)$. For convenience, we set $\phi_m(q)=1-2q^2-q^m$. The asymptotics of $pc(n,m)$ are determined by the poles of $F_m(q)$, which are precisely the zeros of $\phi_m(q)$. We will assume $m>4$, as the result can be verified directly for $m\leq 4$. 
    
    First note that $\phi_m(q)$ has exactly one positive zero, $\alpha_m$, since $\phi_m\p(q)<0$ for all $q>0$, $\phi_m(0)>0$, and $\phi_m(1)<0$. When $m$ is even, $\phi_m(q)$ is an even function and thus $-\alpha_m$ is also a zero. When $m$ is odd, $\phi_m(q)$ has a unique zero in the interval $(-1,-\alpha_m)$, since $\phi\p_m(q)>0$ on $(-1,0)$, $\phi_m(-1)<0$, and $\phi_m(\alpha_m)>0$. Our next goal is to show that these are the only two zeros inside the disc $|q|<0.9$.
    
    Note that $q^2(2+q^{m-2})$ has one zero (with multiplicity equal to 2) inside the disc $|q|<0.9$. Therefore, if we can show
    \[|q^2(2+q^{m-2}|>1\]
    for all $q$ with $|q|=0.9$, we can apply Rouche's theorem to conclude that $\phi_m(q)$ has exactly two zeros inside the disc $|q|<0.9$. For $m>4$ we have $|q^{m-2}|\leq |q^3|$, so that $|2+q^{m-2}|\geq 2-|q^3|=1.279$ and
    \[|q^2(2+q^{m-2})| > 0.81\cdot 1.729 = 1.03599>1.\]
    
    We can now perform a partial fraction decomposition $F_m(q)$,
    \[F_m(q) = \frac{c_m}{1-\alpha_m^{-1}q} + \frac{d_m}{1+\alpha_m^{-1}q} +G(q),\]
    where $G(q)$ is a rational function with no poles in the disc $|q|<0.9$. Therefore,
    \[pc(n,m) = (c_m + (-1)^n d_m)\alpha_m^{-n} + o(\alpha_m^{-n}) \]
    as $n\rightarrow \infty$, noting that $d_m=0$ if $m$ is odd. The result follows. \end{proof}
    
    The following table shows the values of $\alpha_m$, $c_m$, and $d_m$ for various $m$.
    \begin{center}
        \begin{tabular}{c||c|c|c}
             $m$&$\alpha_m^{-1}$&$c_m$&$d_m$  \\
             \hline
             1& $2$&$\frac{1}{2}=0.5$& 0 \\
             2& $\sqrt{3}\approx 1.73$&$\frac{3+\sqrt{3}}{9}\approx0.53$& $\frac{3-\sqrt{3}}{9}\approx 0.14$ \\
             3& $\frac{\sqrt{5}+1}{2}\approx 1.61$&$\frac{5-\sqrt{5}}{5}=0.55$& 0 \\
             4& $\approx 1.55$&$\approx0.58$& $\approx 0.13$ \\
             \vdots&\vdots & \vdots & \vdots \\
             $\infty$ &$\sqrt{2}\approx 1.41$&$\frac{2+\sqrt{2}}{4}\approx 0.85$& $\frac{2-\sqrt{2}}{4}\approx 0.14$
        \end{tabular}
    \end{center}

\section*{Acknowledgements}

The author was partially supported by the Research and Training Group grant DMS-1344994 funded by the National Science Foundation.

{\footnotesize
\begin{table}[]
    \centering
    \begin{tabular}{l||l||l||l}
         $a\in \{0,1,2 \}^{3}$ & Sequence of triples & Composition of $8$  & Composition of $9$  \\
         \hline
         \hline
         (0,0,0)&(1,0,0),(1,0,0),(1,0,0),(1,0,0)&(1,1,1,2,1,1,1)&(1,1,1,3,1,1,1)\\
                &(1,0,0),(1,0,0),(1,0,0),(1,0,1)&(1,1,1,1,1,1,1,1)&(1,1,1,1,1,1,1,1,1)\\
         (0,0,1)&(1,0,0),(1,0,0),(2,0,0)&(1,1,4,1,1)&(1,1,5,1,1)\\
                &(1,0,0),(1,0,0),(2,0,1)&(1,1,2,2,1,1)&(1,1,2,1,2,1,1)\\
         (0,0,2)&(1,0,0),(1,0,0),(1,1,0)&(1,1,3,1,1,1)&(1,1,3,1,1,1,1)\\
                &(1,0,0),(1,0,0),(1,1,1)&(1,1,1,3,1,1)&(1,1,1,1,3,1,1)\\
         (0,1,0)&(1,0,0),(2,0,0),(1,0,0)&(1,2,2,2,1)&(1,2,3,2,1)\\
                &(1,0,0),(2,0,0),(1,0,1)&(1,2,1,1,2,1)&(1,2,1,1,1,2,1)\\
         (0,1,1)&(1,0,0),(3,0,0)&(1,6,1)&(1,7,1)\\
                &(1,0,0),(3,0,1)&(1,3,3,1)&(1,3,1,3,1)\\
         (0,1,2)&(1,0,0),(2,1,0)&(1,4,2,1)&(1,4,1,2,1)\\
                &(1,0,0),(2,1,1)&(1,2,4,1)&(1,2,1,4,1)\\
         (0,2,0)&(1,0,0),(1,1,0),(1,0,0)&(1,3,2,1,1)&(1,3,3,1,1)\\
                &(1,0,0),(1,1,0),(1,0,1)&(1,3,1,1,1,1)&(1,3,1,1,1,1,1)\\
         (0,2,1)&(1,0,0),(1,1,1),(1,0,0)&(1,1,2,3,1)&(1,1,3,3,1)\\
                &(1,0,0),(1,1,1),(1,0,1)&(1,1,1,1,3,1)&(1,1,1,1,1,3,1)\\
         (0,2,2)&(1,0,0),(1,2,0)&(1,5,1,1)&(1,5,1,1,1)\\
                &(1,0,0),(1,2,1)&(1,1,5,1)&(1,1,1,5,1)\\
         (1,0,0)&(2,0,0),(1,0,0),(1,0,0)&(2,1,2,1,2)&(2,1,3,1,2)\\
                &(2,0,0),(1,0,0),(1,0,1)&(2,1,1,1,1,2)&(2,1,1,1,1,1,2)\\
         (1,0,1)&(2,0,0),(2,0,0)&(2,4,2)&(2,5,2)\\
                &(2,0,0),(2,0,1)&(2,2,2,2)&(2,2,1,2,2)\\
         (1,0,2)&(2,0,0),(1,1,0)&(2,3,1,2)&(2,3,1,1,2)\\
                &(2,0,0),(1,1,1,)&(2,1,3,2)&(2,1,1,3,2)\\
         (1,1,0)&(3,0,0),(1,0,0)&(3,2,3)&(3,3,3)\\
                &(3,0,0),(1,0,1)&(3,1,1,3)&(3,1,1,1,3)\\
         (1,1,1)&(4,0,0)&(8)&(9)\\
                &(4,0,1)&(4,4)&(4,1,4)\\
         (1,1,2)&(3,1,0)&(5,3)&(5,1,3)\\
                &(3,1,1)&(3,5)&(3,1,5)\\
         (1,2,0)&(2,1,0),(1,0,0)&(4,2,2)&(4,3,2)\\
                &(2,1,0),(1,0,1)&(4,1,1,2)&(4,1,1,1,2)\\
         (1,2,1)&(2,1,1),(1,0,0)&(2,2,4)&(2,3,4)\\
                &(2,1,1),(1,0,1)&(2,1,1,4)&(2,1,1,1,4)\\
         (1,2,2)&(2,2,0)&(6,2)&(6,1,2)\\
                &(2,2,1)&(2,6)&(2,1,6)\\
         (2,0,0)&(1,1,0),(1,0,0),(1,0,0)&(3,1,2,1,1)&(3,1,3,1,1)\\
                &(1,1,0),(1,0,0),(1,0,1)&(3,1,1,1,1,1)&(3,1,1,1,1,1,1)\\
         (2,0,1)&(1,1,0),(2,0,0)&(3,4,1)&(3,5,1)\\
                &(1,1,0),(2,0,1)&(3,2,2,1)&(3,2,1,2,1)\\
         (2,0,2)&(1,1,0),(1,1,0)&(3,3,1,1)&(3,3,1,1,1)\\
                &(1,1,0),(1,1,1)&(3,1,3,1)&(3,1,1,3,1)\\
         (2,1,0)&(1,1,1),(1,0,0),(1,0,0)&(1,1,2,1,3)&(1,1,3,1,3)\\
                &(1,1,1),(1,0,0),(1,0,1)&(1,1,1,1,1,3)&(1,1,1,1,1,1,3)\\
         (2,1,1)&(1,1,1),(2,0,0)&(1,4,3)&(1,5,3)\\
                &(1,1,1),(2,0,1)&(1,2,2,3)&(1,2,1,2,3)\\
         (2,1,2)&(1,1,1),(1,1,0)&(1,3,1,3)&(1,3,1,1,3)\\
                &(1,1,1),(1,1,1)&(1,1,3,3)&(1,1,1,3,3)\\
         (2,2,0)&(1,2,0),(1,0,0)&(5,2,1)&(5,3,1)\\
                &(1,2,0),(1,0,1)&(5,1,1,1)&(5,1,1,1,1)\\
         (2,2,1)&(1,2,1),(1,0,0)&(1,2,5)&(1,3,5)\\
                &(1,2,1),(1,0,1)&(1,1,1,5)&(1,1,1,1,5)\\
         (2,2,2)&(1,3,0)&(7,1)&(7,1,1)\\
                &(1,3,1)&(1,7)&(1,1,7)
    \end{tabular}
    \caption{The sequences in $\{0,1,2 \}^{n-1}$ ($n=4$), mapped to two sequences of triples, mapped to a composition of $2n=8$ and a composition of $2n+1=9$, each of which is palindromic modulo 2.}
    \label{decomp1}
\end{table} }

{\scriptsize
\begin{table}[]
    \centering
    \begin{tabular}{l||l||l}
        $a\in A_8$& $a=a_1+a_2+\ldots+a_s$& $b\in B_8$  \\
        \hline
        \hline
        $(1,1,1,1,1,1,1,1)$ & $a_1=(1,1,1,1,1,1,1,1)$& $(8)$\\
        \hline 
        $(1,1,2,1,1,1,1)$ & $a_1=(1,1,2)$&$(2,4,2)$\\
                          & $a_2=(1,1,1,1)$&\\
        \hline
        $(1,1,1,2,1,1,1)$ & $a_1=(1,1,1,2)$&$(1,3,4)$\\
                          & $a_2=(1,1,1)$&\\
        \hline
        $(1,1,1,1,2,1,1)$ & $a_1=(1,1,1,1,2)$&$(3,2,3)$\\
                          & $a_2=(1,1)$&\\
        \hline
        $(1,1,1,1,1,2,1)$ & $a_1=(1,1,1,1,1,2)$&$(2,1,5)$\\
                            & $a_2=(1)$& \\
        \hline
        $(1,1,1,1,1,1,2)$ & $a_1=(1,1,1,1,1,1,2)$& $(4,4)$\\
        \hline
        $(1,1,2,2,1,1)$ & $a_1=(1,1,2)$& $(2,1,2,1,2)$\\
                        & $a_2=(2)$& \\
                        &  $a_3=(1,1)$& \\
        \hline
        $(1,1,2,1,2,1)$ & $a_1=(1,1,2,1,2)$ &$(5,1,2)$\\
                          & $a_2=(1)$& \\
        \hline
        $(1,1,2,1,1,2)$ & $a_1=(1,1,2)$&$(2,2,2,2)$ \\
                          & $a_2=(1,1,2)$& \\
        \hline
        $(1,1,1,2,2,1)$ & $a_1=(1,1,1,2)$& $(1,1,1,4)$\\
                          & $a_2=(2)$ &\\
                          & $a_3=(1)$ &\\
        \hline
        $(1,1,1,2,1,2)$ & $a_1=(1,1,1,2,1,2)$ &$(1,7)$\\
        \hline            
        $(1,1,1,1,2,2)$ & $a_1=(1,1,1,1,2)$ &$(3,1,1,3)$\\
                          & $a_2=(2)$& \\
        \hline
        $(1,1,2,2,2)$   & $a_1=(1,1,2)$&$(2,1,1,1,1,2)$ \\
                          & $a_2=(2)$ &\\
                          & $a_3=(2)$& \\
        \hline 
        $(2,1,1,1,1,1,1)$ & $a_1=(2)$& $(1,6,1)$\\
                            & $a_2=(1,1,1,1,1,1)$ &\\
        \hline
        $(2,2,1,1,1,1)$   & $a_1=(2)$& $(1,1,4,1,1)$\\
                            & $a_2=(2)$& \\
                            & $a_3=(1,1,1,1)$ &\\
        \hline 
        $(2,1,2,1,1,1)$   & $a_1=(2,1,2)$& $(4,3,1)$\\
                            & $a_2=(1,1,1)$& \\
        \hline
        $(2,1,1,2,1,1)$   & $a_1=(2)$&$(1,2,2,2,1)$ \\
                            & $a_2=(1,1,2)$ &\\
                            & $a_3=(1,1)$& \\
        \hline 
        $(2,1,1,1,2,1)$   & $a_1=(2)$& $(1,1,1,4,1)$\\
                            & $a_2=(1,1,1,2)$ &\\
                            & $a_3=(1)$ &\\
        \hline
        $(2,1,1,1,1,2)$   & $a_1=(2)$ &$(1,3,3,1)$\\
                            & $a_2=(1,1,1,1,2)$ &\\
        \hline
        $(2,2,2,1,1)$     & $a_1=(2)$&$(1,1,1,2,1,1,1)$ \\
                            & $a_2=(2)$& \\
                            & $a_3=(2)$ &\\
                            & $a_4=(1,1)$ &\\
        \hline
        $(2,2,1,2,1)$     & $a_1=(2)$ &$(1,4,1,1,1)$\\
                            & $a_2=(2,1,2)$ &\\
                            & $a_3=(1)$& \\
        \hline 
        $(2,2,1,1,2)$     & $a_1=(2)$&$(1,1,2,2,1,1)$ \\
                            & $a_2=(2)$& \\
                            & $a_3=(1,1,2)$&\\
        \hline 
        $(2,1,2,2,1)$     & $a_1=(2,1,2)$& $(4,1,1,1,1)$\\
                            & $a_2=(2)$ &\\
                            & $a_3=(1)$& \\
        \hline
        $(2,1,2,1,2)$     & $a_1=(2,1,2,1,2)$ &$(7,1)$\\
        \hline
        $(2,1,1,2,2)$     & $a_1=(2)$ &$(1,2,1,1,2,1)$\\
                            & $a_2=(1,1,2)$ &\\
                            & $a_3=(2)$ &\\
        \hline
        $(2,2,2,2)$       & $a_1=(2)$& $(1,1,1,1,1,1,1,1)$\\
                            & $a_2=(2)$ &\\
                            & $a_3=(2)$ &\\
                            & $a_4=(2)$& \\
    \end{tabular}
    \caption{The decomposition of each composition $a\in A_8$, and the resulting composition $b\in B_8$.}
    \label{decomp}
\end{table} }

\end{document}